\documentclass{amsart}

\usepackage[T1]{fontenc}
\usepackage[utf8]{inputenc}

\usepackage{amsmath}
\usepackage{amssymb}

\newtheorem{theorem}{Theorem}[section]

\newtheorem{lemma}[theorem]{Lemma}
\newtheorem{proposition}[theorem]{Proposition}
\newtheorem{corollary}[theorem]{Corollary}

\begin{document}
\title{Odd Catalan numbers modulo $2^k$}
\author{Hsueh-Yung Lin}
\address{\'Ecole Normale Sup\'erieure de Lyon}
\date{\today}

\begin{abstract}
This article proves a conjecture by S.-C.~Liu and C.-C.~Yeh about Catalan numbers, which states that odd Catalan numbers can take exactly $k-1$ distinct values modulo~$2^k$, namely the values $C_{2^1-1},\ldots,\allowbreak C_{2^{k-1}-1}$.
\end{abstract}

\maketitle

\setcounter{section}{-1}

\section{Notation}

In this article we denote $C_n := \frac{(2n)!}{(n+1)!n!}$ the $n$-th Catalan number. We also define $(2n+1)!! := 1\times 3\times\cdots\times(2n+1)$. For $x$ an integer, $\nu_2(x)$ stands for the $2$-adic valuation of~$x$, i.e.\ $\nu_2(x)$ is the largest integer $a$ such that $2^a$ divides~$x$.

\section{Introduction}

The main result of this article is Theorem~\ref{thmp}, which proves a conjecture by S.-C.~Liu and C.-C.~Yeh about odd Catalan numbers~\cite{key-9}. To begin with, let us recall the characterization of odd Catalan numbers:
\begin{proposition}
A Catalan number $C_n$ is odd if and only if $n=2^a-1$ for some integer~$a$.
\end{proposition}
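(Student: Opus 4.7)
The plan is to compute the $2$-adic valuation of $C_n$ directly, using Legendre's formula for $\nu_2(m!)$, and then read off the condition that this valuation vanishes.

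First I would invoke Legendre's formula in the form $\nu_2(m!) = m - s_2(m)$, where $s_2(m)$ denotes the number of ones in the binary expansion of~$m$. Since $C_n = \frac{(2n)!}{(n+1)!\,n!}$, this gives
\begin{equation*}
\nu_2(C_n) = \bigl(2n - s_2(2n)\bigr) - \bigl((n+1) - s_2(n+1)\bigr) - \bigl(n - s_2(n)\bigr).
\end{equation*}
Using the elementary identity $s_2(2n) = s_2(n)$, the right-hand side collapses to $s_2(n+1) - 1$.

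Once the formula $\nu_2(C_n) = s_2(n+1) - 1$ is established, the conclusion is immediate: $C_n$ is odd precisely when $s_2(n+1) = 1$, i.e.\ when $n+1$ is a power of~$2$, which is exactly the condition $n = 2^a - 1$.

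There is essentially no serious obstacle here; the only mildly delicate point is the bookkeeping in the Legendre-formula calculation, and an alternative path via Kummer's theorem (where $\nu_2\binom{2n}{n} = s_2(n)$ counts the carries in the base-$2$ addition $n+n$, combined with $\nu_2(n+1) = (n+1) - s_2(n+1) - (n - s_2(n+1)+1) \cdots$) would give the same result and could be substituted if preferred. Since this proposition is classical and only serves to set up the notation $n = 2^a - 1$ used in Theorem~\ref{thmp}, a short self-contained Legendre-formula argument is the most efficient presentation.
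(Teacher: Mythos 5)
Your argument is correct, and it is worth noting that the paper itself gives no proof of this proposition at all: it simply declares the result easy and cites a reference, so there is nothing to compare your route against line by line. Your Legendre-formula computation is the standard one, and the identity you derive, $\nu_2(C_n) = s_2(n+1) - 1$, is precisely the Deutsch--Sagan valuation formula that the introduction mentions; specializing it to $\nu_2(C_n)=0$ gives exactly the stated characterization. The arithmetic checks out: $\nu_2(C_n) = (2n - s_2(2n)) - ((n+1) - s_2(n+1)) - (n - s_2(n))$ collapses to $s_2(n+1)-1$ once you use $s_2(2n)=s_2(n)$, and $s_2(n+1)=1$ is equivalent to $n = 2^a-1$. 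One small caveat: the parenthetical sketch of the Kummer-theorem alternative in your last paragraph is garbled (the displayed expression for $\nu_2(n+1)$ trails off and is not a correct identity as written), so if you keep that remark you should either fix it or drop it; since it is explicitly offered only as an optional substitute, it does not affect the validity of the main argument.
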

\noindent That result is easy, see e.g.~\cite{key-2}.

The main theorem we are going to prove is the following:

\begin{theorem} \label{thmp}
For all $k\geq2$, the numbers $C_{2^1-1}, C_{2^2-1},\ldots, C_{2^{k-1}-1} $ all are distinct modulo~$2^k$, and modulo~$2^k$ the sequence $(C_{2^n-1})_{n\geq 1}$ is constant from rank ${k-1}$ on.
\end{theorem}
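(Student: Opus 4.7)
The plan is to reduce the theorem to the single stronger statement
\begin{equation*}
\nu_2(C_{2^{a+1}-1} - C_{2^a-1}) = a+1 \quad \text{for all } a \geq 1. \qquad (\ast)
\end{equation*}
This yields both claimed conclusions: telescoping gives $C_{2^n-1} \equiv C_{2^{k-1}-1} \pmod{2^k}$ for $n \geq k-1$, and for $1 \leq i < j \leq k-1$, combining $(\ast)$ with $C_{2^j-1} \equiv C_{2^{i+1}-1} \pmod{2^{i+2}}$ forces $\nu_2(C_{2^j-1} - C_{2^i-1}) = i+1 < k$, so these differences are nonzero modulo $2^k$.

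Writing $c_a := C_{2^a-1}$, a careful $2$-adic bookkeeping in $C_n = (2n)!/((n+1)!\,n!)$ via the standard recursion for the odd part of $n!$ yields the compact expression
\begin{equation*}
c_a = \frac{(2^{a+1}-3)!!}{\prod_{i=1}^{a}(2^i-1)!!},
\end{equation*}
and hence
\begin{equation*}
\frac{c_{a+1}}{c_a} = \prod_{k} \left(1 + \frac{y}{k}\right), \qquad y := 2^{a+1}-2,
\end{equation*}
where $k$ runs over odd integers in $[1, 2^{a+1}-1]$. Because the set $\{k-y : k\}$ is precisely $\{-(2^{a+1}-3), -(2^{a+1}-5), \ldots, -1, 1\}$, the companion product collapses to $\prod_k (1 - y/k) = -1/(2^{a+1}-1)$. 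Dividing the two products yields the key identity
\begin{equation*}
\frac{c_{a+1}}{c_a} = -(2^{a+1}-1) \prod_k \left(1 - \frac{z}{k^2}\right), \qquad z := y^2, \quad \nu_2(z) = 2.
\end{equation*}

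Since $\nu_2(z/k^2) = 2$ for every odd $k$, the $2$-adic logarithm of this product converges and equals $L := -\sum_{j \geq 1} z^j Q_j/j$, with power sums $Q_j := \sum_k k^{-2j}$. The technical core of the proof is the estimate
\begin{equation*}
\nu_2(Q_j) = a \quad \text{for every } j \geq 1,
\end{equation*}
which I would prove by induction on $a$: pairing $k$ with $2^{a+1}-k$ and expanding $(2^{a+1}-k)^{-2j} = k^{-2j}(1 - 2^{a+1}/k)^{-2j}$ in a geometric series gives the recursion $Q_j^{(a)} \equiv 2\, Q_j^{(a-1)} \pmod{2^{a+2+\nu_2(j)}}$, which iterated from $Q_j^{(0)}=1$ yields the claim. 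It follows that $\nu_2(z^j Q_j/j) = 2j + a - \nu_2(j) \geq a+2$ with equality only at $j = 1$, so $\nu_2(L) = a+2$ and thus $\prod_k(1 - z/k^2) \equiv 1 \pmod{2^{a+2}}$.

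Plugging this back gives $c_{a+1}/c_a \equiv -(2^{a+1}-1) \equiv 1 - 2^{a+1} \pmod{2^{a+2}}$, so $c_{a+1}/c_a - 1$ has $2$-adic valuation exactly $a+1$; multiplying by the odd integer $c_a$ establishes $(\ast)$. The main obstacle is the key estimate $\nu_2(Q_j) = a$; every other step is either a symbolic manipulation of products or a routine $2$-adic convergence argument.
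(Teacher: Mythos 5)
Your reduction of the whole theorem to the single statement $\nu_2(C_{2^{a+1}-1}-C_{2^a-1})=a+1$ is valid (telescoping gives the eventual constancy, and the ultrametric inequality turns the exact valuation into distinctness), and the route is genuinely different from the paper's. The paper never computes the valuation of consecutive differences: it proves distinctness by contradiction from the two double-factorial congruences $(2^a-1)!!\equiv 1\pmod{2^a}$ and $(2^a-3)!!\equiv -1\pmod{2^{a+1}}$ of Lemma~\ref{lemeg}, and proves constancy by a separate induction. Your compact formula $c_a=(2^{a+1}-3)!!/\prod_{i=1}^{a}(2^i-1)!!$ is exactly the paper's Lemma~\ref{lemcalc}, and your evaluation $\prod_k(1-y/k)=-1/(2^{a+1}-1)$ is a repackaging of the same Wilson-type identities; I checked the product manipulations and they are correct. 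What your approach buys is a sharper unified statement (the exact valuation $a+1$, which the paper only obtains implicitly by combining its two propositions); what it costs is the $2$-adic analytic machinery of logarithms and power sums where the paper stays entirely within finite congruences.

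The one place where you assert rather than prove is the core estimate $\nu_2(Q_j)=a$, which you yourself flag as the obstacle. The pairing $k\leftrightarrow 2^{a+1}-k$ does work, but ``expanding in a geometric series'' does not immediately deliver the modulus $2^{a+2+\nu_2(j)}$: the $i$-th order term contributes valuation $\nu_2\binom{2j+i-1}{i}+i(a+1)$, and bounding this below by $a+2+\nu_2(j)$ for all $i\geq 2$ requires an extra argument such as the identity $i\binom{2j+i-1}{i}=2j\binom{2j+i-1}{i-1}$. Fortunately you need much less: writing $(1-2^{a+1}/k)^{-2j}-1=(u-1)(1+u+\cdots+u^{j-1})$ with $u=(1-2^{a+1}/k)^{-2}$ and $\nu_2(u-1)=a+2$ gives the recursion $Q_j^{(a)}\equiv 2Q_j^{(a-1)}\pmod{2^{a+2}}$ outright, whence $Q_j^{(a)}\equiv 2^a\pmod{2^{a+2}}$, and this weaker statement already yields $\nu_2(L)\geq a+2$, which is all your final step uses (you only need $\prod_k(1-z/k^2)\equiv 1\pmod{2^{a+2}}$, since the exact valuation $a+1$ of $c_{a+1}/c_a-1$ then comes from the explicit $-(2^{a+1}-1)$ factor). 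With that repair spelled out, your argument is complete and correct.
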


Here are a few historical references about the values of the $C_n$ modulo~$2^k$. Deutsch and Sagan~\cite{key-1} first computed the $2$-adic valuations of the Catalan numbers. Next S.-P.~Eu, S.-C.~Liu and Y.-N.~Yeh~\cite{key-5} determined the modulo~$8$ values of the~$C_n$. Then S.-C.~Liu et C.-C.~Yeh determined the modulo~$64$ values of the~$C_n$ by extending the method of Eu, Liu and Yeh in~\cite{key-9}, in which they also stated Theorem~\ref{thmp} as a conjecture.

Our proof of Theorem~\ref{thmp} will be divided into three parts. In Section~\ref{S2} we will begin with the case $k=2$, which is the initialization step for a proof of Theorem~\ref{thmp} by induction. In Section~\ref{S3} we will prove that the numbers $C_{2^1-1}, C_{2^2-1},\ldots, C_{2^{k-1}-1}$ all are distinct modulo~$2^k$. Finally in Section~\ref{S4} we will prove that $C_{2^n-1} \equiv C_{2^{k-1}-1} \pmod{2^k}$ for all ${n\geq k-1}$.

\section{Odd Catalan numbers modulo~$4$}\label{S2}

In this section we prove that any odd Catalan number is congruent to $1$ modulo~$4$, which is Theorem~\ref{thmp} for ${k=2}$. Though this result can be found in~\cite{key-5}, I give a more ``elementary'' proof, in which I shall also make some computations which will be used again in the sequel.

Before starting, we state two identities:

\begin{lemma}\label{lemeg}
For any $a\geq 3$, the following identities hold: 
\begin{equation}\label{1}
(2^{a}-1)!!\equiv 1\pmod{2^{a}};
\end{equation} 
\begin{equation}\label{2}
(2^a-3)!!\equiv -1 \pmod{2^{a+1}}.
\end{equation} 
\end{lemma}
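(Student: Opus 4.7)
Both identities concern products of consecutive odd integers modulo a power of~$2$. For~\eqref{1}, observe that the odd integers in $[1,2^a-1]$ form a complete set of representatives for $(\mathbb{Z}/2^a\mathbb{Z})^\times$, so $(2^a-1)!!$ is congruent modulo~$2^a$ to the product of all units. By the standard Wilson-type identity for finite abelian groups, this product equals the product of the involutions, i.e.\ the solutions to $x^2\equiv 1\pmod{2^a}$. For $a\ge 3$ these are exactly $\pm 1$ and $2^{a-1}\pm 1$, and their product is $-(2^{a-1}-1)(2^{a-1}+1)=1-2^{2a-2}\equiv 1\pmod{2^a}$, which establishes~\eqref{1}.

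For~\eqref{2}, I would reduce to a sharpening of~\eqref{1}. Since $(2^a-1)^2=1-2^{a+1}+2^{2a}\equiv 1\pmod{2^{a+1}}$, the element $2^a-1$ is self-inverse modulo~$2^{a+1}$, so $(2^a-3)!!=(2^a-1)!!/(2^a-1)\equiv(2^a-1)\cdot(2^a-1)!!\pmod{2^{a+1}}$. It therefore suffices to prove the sharper congruence $(2^a-1)!!\equiv 1+2^a\pmod{2^{a+1}}$, which yields~\eqref{2} via $(1+2^a)(2^a-1)=-1+2^{2a}\equiv -1\pmod{2^{a+1}}$.

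I would prove this sharper congruence by induction on $a\ge 3$, the base case $7!!=105\equiv 9=1+2^3\pmod{16}$ being a direct computation. For the inductive step, pair each odd $k$ in $[1,2^a-1]$ with $2^a-k$ (both odd and distinct) to get
\begin{equation*}
(2^a-1)!!\;=\;\prod_{\substack{1\le k\le 2^{a-1}-1\\ k\text{ odd}}} k(2^a-k),
\end{equation*}
a product of $n=2^{a-2}$ factors, each congruent to $2^a-k^2$ modulo~$2^{a+1}$ since $k$ is odd. Expanding $\prod(2^a-k^2)$ produces $\prod k^2-2^a\sum_i\prod_{j\ne i}k_j^2$ modulo higher powers of $2^a$. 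The main step, and the only delicate point, is a parity argument showing that the first-order correction vanishes modulo~$2^{a+1}$: the relevant sum is an odd multiple of $\sum_i k_i^{-2}$, itself a sum of $n=2^{a-2}$ odd units, which is even for $a\ge 3$. This yields the recursion $(2^a-1)!!\equiv((2^{a-1}-1)!!)^2\pmod{2^{a+1}}$; squaring the inductive hypothesis $(2^{a-1}-1)!!\equiv 1+2^{a-1}\pmod{2^a}$ then gives $1+2^a+2^{2a-2}\equiv 1+2^a\pmod{2^{a+1}}$, as $2a-2\ge a+1$ for $a\ge 3$, completing the induction.
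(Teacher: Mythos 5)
Your proposal is correct, but it takes a genuinely different route from the paper on both counts. The paper proves~\eqref{1} by induction on~$a$: it splits $(2^a-1)!!$ at $2^{a-1}$, replaces the upper half by $-(2^{a-1}-1),\ldots,-1$ modulo~$2^a$ to get $(2^a-1)!!\equiv\bigl((2^{a-1}-1)!!\bigr)^2\pmod{2^a}$, and uses the fact that the square of anything $\equiv 1\pmod{2^{a-1}}$ is $\equiv 1\pmod{2^a}$. You instead identify $(2^a-1)!!$ with the product of all units of $(\mathbb{Z}/2^a\mathbb{Z})^\times$ and invoke the generalized Wilson theorem; this is induction-free and conceptually cleaner, at the price of citing the classification of the square roots of $1$ modulo~$2^a$. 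For~\eqref{2} the paper again runs a direct induction, pairing $2^{a-1}\mp(2k+1)$ inside $(2^a-3)!!$ to get $(2^a-3)!!\equiv-\bigl((2^{a-1}-3)!!\bigr)^2\pmod{2^{a+1}}$, where the sign $-1$ falls out of the \emph{odd} number $2^{a-2}-1$ of factors and the squaring again absorbs the mod-$2^a$ ambiguity of the inductive hypothesis for free. You instead derive~\eqref{2} from the strictly sharper statement $(2^a-1)!!\equiv 1+2^a\pmod{2^{a+1}}$, dividing by the self-inverse unit $2^a-1$. That refinement forces you to prove the recursion $(2^a-1)!!\equiv\bigl((2^{a-1}-1)!!\bigr)^2$ one power of two higher than the paper needs it, which is why you must expand $\prod_i(2^a-k_i^2)$ and check that the first-order term $2^a\sum_i\prod_{j\neq i}k_j^2$ vanishes modulo~$2^{a+1}$ (a sum of the even number $2^{a-2}$ of odd terms). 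All of these steps check out, including the final $2^{2a-2}\equiv 0\pmod{2^{a+1}}$ for $a\ge 3$. In short: the paper's twin inductions are shorter because the squaring step silently upgrades the modulus; your version is heavier on~\eqref{2} but yields a stronger intermediate congruence and a structural, non-inductive proof of~\eqref{1}.
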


\begin{proof}
We are proving the two identities separately. In both cases we reason by induction on~$a$, both equalities being trivial for ${a=3}$. So, let $a \geq 4$ and suppose the result stands true for ${a-1}$. First we have
 \begin{eqnarray*}
(2^{a}-1)!! & = & 1\times 3\times\cdots\times(2^{{a-1}}-1)\times(2^{{a-1}}+1)\times\cdots\times(2^{a}-1)\\
 & \equiv & 1\times3\times\cdots\times(2^{{a-1}}-1)\times(-(2^{{a-1}}-1))\times\cdots\times(-1)\\
 & = & (1\times3\times\cdots\times(2^{{a-1}}-1))^{2}\times (-1)^{2^{a-2}} \pmod{2^{a}}.
 \end{eqnarray*}
Since, by the induction hypothesis, $1\times3\times\cdots\times(2^{a-1}-1)$ is equal to $1$ or ${2^{a-1}+1}$ modulo~$2^a$, we have $(1\times3\times\cdots\times(2^{{a-1}}-1))^2\equiv1\pmod{2^a}$ in both cases, from which the first identity follows.

For the second identity,
\[(2^{a}-3)!! = \prod_{k=1}^{2^{a-2}-1}(2k+1) \cdot \prod_{k=2^{a-2}}^{2^{a-1}-2}(2k+1).\]
Reversing the order of the indexes in the first product and translating the indexes in the second one, we get
 \begin{eqnarray*}
 (2^{a}-3)!!  & = & \prod_{k=0}^{2^{a-2}-2}(2^{a-1}-(2k+1)) \cdot \prod_{k=0}^{2^{a-2}-2}(2^{a-1}+(2k+1))\\ 
 & = & \prod_{k=0}^{2^{a-2}-2}[2^{2(a-1)}-(2k+1)^2]\\
 & \equiv & \prod_{k=0}^{2^{a-2}-2}[-(2k+1)^2] = -(2^{{a-1}}-3)!!^{2} \pmod{2^{{a+1}}}.
 \end{eqnarray*}
By the induction hypothesis, $(2^{{a-1}}-3)!!$ is equal to ${-1}$ or ${2^{a}-1}$ modulo $2^{a+1}$, and in either case the result follows.
\end{proof}

Now comes the main proposition of this section:

\begin{proposition}\label{thmk=2}
Fore all integer~$a$, $C_{2^a-1}\equiv 1 \pmod{4}$.
\end{proposition}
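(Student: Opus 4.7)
The plan is to reduce the claim to the two identities of Lemma~\ref{lemeg} by expressing $C_{2^a-1}$ as an explicit ratio of odd double factorials. Starting from $C_{2^a-1} = \frac{(2^{a+1}-2)!}{(2^a-1)!\,(2^a)!}$, I would separate even and odd factors via the formula $(2m)! = 2^m\,m!\,(2m-1)!!$. Applied once to the numerator this gives $(2^{a+1}-2)! = 2^{2^a-1}(2^a-1)!\,(2^{a+1}-3)!!$, and iterating it on $(2^a)!$, $(2^{a-1})!$, $\ldots$ yields $(2^a)! = 2^{2^a-1}\prod_{m=1}^{a}(2^m-1)!!$. Cancelling the common factors $2^{2^a-1}$ and $(2^a-1)!$ leads to the explicit identity
\[
C_{2^a-1} \;=\; \frac{(2^{a+1}-3)!!}{\displaystyle\prod_{m=1}^{a}(2^m-1)!!}.
\]

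I would then clear the denominator to obtain the integer equality $C_{2^a-1}\cdot\prod_{m=1}^{a}(2^m-1)!! = (2^{a+1}-3)!!$ and reduce it modulo~$4$. The cases $a=0,1$ are trivial since $C_0=C_1=1$, so assume $a\geq 2$. For every $m\geq 3$, Lemma~\ref{lemeg}(\ref{1}) gives $(2^m-1)!!\equiv 1\pmod 4$, and $(2^1-1)!!\cdot(2^2-1)!! = 1\cdot 3\equiv -1\pmod 4$, so the whole product is congruent to $-1\pmod 4$. Since $a+1\geq 3$, Lemma~\ref{lemeg}(\ref{2}) gives $(2^{a+1}-3)!!\equiv -1\pmod 4$. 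As $-1$ is invertible modulo~$4$, we conclude $C_{2^a-1}\equiv 1\pmod 4$.

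There is no serious obstacle here: the only technical point is the careful bookkeeping of $2$-adic valuations when splitting factorials into odd and even parts to obtain the displayed identity, after which the conclusion drops out immediately from Lemma~\ref{lemeg}. This explicit factored form for $C_{2^a-1}$ should moreover be the natural starting point for the inductive arguments in the later sections, where the arithmetic of the double factorials $(2^m-1)!!$ modulo higher powers of~$2$ will have to be tracked more precisely.
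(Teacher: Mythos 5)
Your argument is correct and follows essentially the same route as the paper: you derive the identity $C_{2^a-1} = (2^{a+1}-3)!!\big/\prod_{m=1}^{a}(2^m-1)!!$ (the content of Lemma~\ref{lemcalc}) and then evaluate both sides modulo~$4$ via Lemma~\ref{lemeg}. Your derivation of that identity through $(2m)! = 2^m\,m!\,(2m-1)!!$ is a slightly cleaner bookkeeping than the paper's telescoping with the valuations $\omega_i$, but the substance is the same.
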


\begin{proof}
Put $n := 2^a-1$. We want to prove that $4\mid \frac{(2n)!}{n!(n+1)!}-1=\frac{(2n)!-n!(n+1)!}{n!(n+1)!}$. Let us denote $\omega := \nu_2[(2n)!]$. Since $C_n = \frac{(2n)!}{n!(n+1)!}$ is odd, one also has $\omega = \nu_2[n!(n+1)!]$. Then, proving that $4\mid \frac{(2n)!-n!(n+1)!}{n!(n+1)!}$ is equivalent to proving that $4\mid\frac{(2n)!}{2^{\omega}}-\frac{n!(n+1)!}{2^{\omega}}$. To do that, it suffices to show that $\frac{n!(n+1)!}{2^{\omega}} \equiv 1 \pmod4$ and $\frac{(2n)!}{2^{\omega}} \equiv 1 \pmod4$.

As $\omega = \nu_2[n!(n+1)!] =  \nu_2[(n!)^22^a] = a+2\nu_2(n!)$, one has $\nu_2(n!)=(\omega - a)/2$, thus $n!/2^{(\omega-a)/2}$ is an odd number by the very definition of valuation. That yields the first equality:
\[
\frac{n!(n+1)!}{2^{\omega}} = \frac{(n!)^{2}(n+1)}{2^{\omega}} = \frac{(n!)^{2}2^{a}}{2^{\omega}}
= \left(\frac{n!}{2^{(\omega-a)/2}}\right)^{2} \equiv 1 \pmod{4}.
\]

Concerning the equality $ \frac{(2n)!}{2^{\omega}}\equiv 1 \pmod4 $, it is easy to check for ${a\leq2}$; now we consider the case ${a\geq3}$, to which we can apply Lemma~\ref{lemeg}. For all $i\leq a$, put $\omega_i :=\nu_2[(2^{a-i+1}-1)!]$. For $i<a$, one has
\[
\frac{(2^{a-i+1}-1)!}{2^{\omega_i}} = \frac{(2^{a-i+1}-1)!! (\prod_{p=1}^{2^{a-i}-1}2p)}{2^{\omega_i}}=(2^{a-i+1}-1)!!\frac{(2^{a-i}-1)!}{2^{\omega_i+2^{a-i}-1}}.
\]
As the left-hand side of this equality is odd, so is its right-hand side, so that ${\omega_i+2^{a-i}-1}$ is actually the $2$-adic valuation of ${2^{a-i}-1}$. In the end, we have shown that
\[
\frac{(2^{a-i+1}-1)!}{2^{\omega_i}} = (2^{a-i+1}-1)!!\frac{(2^{a-i}-1)!}{2^{\omega_{i+1}}}.
\]
Morevoer, for $i=a$ it is immediate that $(2^{a-i+1}-1)!/2^{\omega_i} = 1$, whence
 \begin{eqnarray*} 
\frac{(2n)!}{2^{\omega}}&=& \frac{(2^{a+1}-2)!}{2^{\omega}} = \frac{1}{2^{a+1}-1}\cdot\frac{(2^{a+1}-1)!}{2^{\omega}}\\
&=& \frac{1}{2^{a+1}-1}\cdot(2^{a+1}-1)!!\cdot \frac{(2^{a}-1)!}{2^{\omega_{1}}}\\
&=& \frac{1}{2^{a+1}-1}\cdot(2^{a+1}-1)!!\cdot (2^{a}-1)!!\cdot  \frac{(2^{a-1}-1)!}{2^{\omega_{2}}}\\
&=& \cdots \rule{0pt}{4ex}\\ 
&=& \frac{1}{2^{a+1}-1}\cdot \prod_{k=1}^{a+1}(2^{k}-1)!!\\
&=& \frac{1}{2^{a+1}-1}\cdot(2^{a+1}-1)!!\cdot \prod_{k=1}^{a}(2^{k}-1)!!\\
&=& (2^{a+1}-3)!!\cdot \prod_{k=1}^{a}(2^{k}-1)!!.
 \end{eqnarray*}
But, modulo~$4$, one has $(2^{a+1}-3)!! \equiv -1$ by~\eqref{2} in Lemma~\ref{lemeg}, $(2^1 -1)!! \equiv 1$, $(2^2 -1)!! \equiv -1$ and $(2^k -1)!! \equiv 1$ for ${k\geq3}$ by~\eqref{1} in Lemma~\ref{lemeg}, whence $(2n)!/2^{\omega} {\equiv 1}$.
\end{proof}

Before ending this section, I highlight an intermediate result of the previous proof by stating it as a lemma:

\begin{lemma} \label{lemcalc} For $a \geq 0$, putting $\omega := \nu_2[(2^{a+1}-2)!]$,
\[ \frac{(2^{a+1}-2)!}{2^{\omega}} = (2^{a+1}-3)!!\cdot \prod_{k=1}^{a}(2^{k}-1)!! .\]
\end{lemma}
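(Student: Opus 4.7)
The identity asserts that after stripping out all powers of $2$ from $(2^{a+1}-2)!$, the odd part factors as a product of double factorials. The natural strategy is to separate even and odd factors recursively, using the elementary identity $(2m-1)! = (2m-1)!! \cdot (2m-2)!! = (2m-1)!! \cdot 2^{m-1}(m-1)!$, which follows from pulling a factor of~$2$ out of each of the even numbers $2, 4, \ldots, 2m-2$.

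Applied with $2m = 2^{j}$, this gives the telescoping recursion $(2^j - 1)! = (2^j - 1)!! \cdot 2^{2^{j-1}-1} \cdot (2^{j-1}-1)!$. I would iterate this from $j = a+1$ down to $j = 1$, with the base case $(2^0-1)! = 0! = 1$, to obtain an explicit factorization
\[
(2^{a+1}-1)! \;=\; \Bigl(\prod_{k=1}^{a+1}(2^k-1)!!\Bigr) \cdot 2^{N},
\]
where $N = \sum_{j=1}^{a+1}(2^{j-1}-1)$. Dividing by $2^{a+1}-1$ to pass from $(2^{a+1}-1)!$ to $(2^{a+1}-2)!$, the topmost double factorial $(2^{a+1}-1)!!$ collapses to $(2^{a+1}-3)!!$, and the claimed product $(2^{a+1}-3)!! \cdot \prod_{k=1}^{a}(2^k-1)!!$ appears multiplied by the same power of~$2$.

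To finish, I would observe that every double factorial on the right-hand side is odd, so the entire $2$-adic contribution is concentrated in $2^N$; by the definition of $\omega$ this forces $N = \omega$ (one could also compute $N = 2^{a+1}-a-2$ by summing a geometric series, but this is not needed for the conclusion). Dividing through by $2^\omega$ then yields the claimed identity. There is no real obstacle here: the content is exactly the recursive factorization already carried out inside the proof of Proposition~\ref{thmk=2}, and the only mild bookkeeping is the collapse $(2^{a+1}-1)!!/(2^{a+1}-1) = (2^{a+1}-3)!!$ at the top of the product.
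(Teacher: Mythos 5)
Your proof is correct and follows essentially the same route as the paper: the paper's argument (carried out inside the proof of Proposition~\ref{thmk=2}) uses exactly the same splitting $(2^j-1)! = (2^j-1)!!\cdot 2^{2^{j-1}-1}(2^{j-1}-1)!$ iterated down to $j=1$, followed by the collapse $(2^{a+1}-1)!!/(2^{a+1}-1)=(2^{a+1}-3)!!$. The only cosmetic difference is that you accumulate the power of $2$ as a single exponent $N$ and identify $N=\omega$ at the end, whereas the paper divides by the relevant valuation at each stage and uses oddness step by step.
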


\section{Distinctness modulo $2^k$ of the $C_{2^1-1},\ldots,C_{2^{k-1}-1}$}\label{S3}

In this section we prove that for all $k\geq2$, the numbers $C_{2^1-1},\ldots,\allowbreak C_{2^{k-1}-1} $ are distinct modulo~$2^k$.
To begin with, we state a lemma which gives an equivalent formulation to the equality ``$C_{2^{m}-1}\equiv p \pmod{2^k}$''. This lemma will be used in Sections~\ref{S3} and~\ref{S4}.

\begin{lemma}\label{lemmod}
Let $k\geq 2$ and $m\geq 1$, then $C_{2^{m}-1}\equiv p \pmod {2^k}$ if and only if
 \[
(2^{m+1}-3)!!\equiv p\prod_{n=1}^{m}(2^{n}-1)!! \pmod{2^{k}}.
 \]
\end{lemma}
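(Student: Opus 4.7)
The plan is to show that the congruence in the lemma is the reduction modulo~$2^k$ of a single integer identity, namely
\[
C_{2^m-1}\cdot \prod_{n=1}^m (2^n-1)!! \;=\; (2^{m+1}-3)!!.
\]
Once this identity is in hand, the lemma is immediate: the product $\prod_{n=1}^m(2^n-1)!!$ is odd and therefore invertible modulo~$2^k$, so one may freely multiply or divide both sides of a congruence by it, which turns ``$C_{2^m-1}\equiv p\pmod{2^k}$'' into the stated equivalent form.

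To derive the integer identity, I would set $n := 2^m-1$ and $\omega := \nu_2((2n)!)$, and start from the tautology $C_n\cdot n!(n+1)! = (2n)!$. Dividing by~$2^{\omega}$ is legitimate over~$\mathbb Z$ because $C_n$ is odd, which forces $\omega = \nu_2(n!(n+1)!)$; this yields
\[
C_{2^m-1}\cdot \frac{n!(n+1)!}{2^{\omega}} \;=\; \frac{(2n)!}{2^{\omega}}.
\]
The right-hand side is exactly what Lemma~\ref{lemcalc} computes: it equals $(2^{m+1}-3)!!\cdot \prod_{n=1}^m(2^n-1)!!$. For the left-hand side, writing $n!(n+1)! = (n!)^2\cdot 2^m$ (using $n+1=2^m$) reduces the problem to evaluating $n!/2^{\nu_2(n!)}$, and the same ``peel off $(2^m-1)!!$, halve what's left, recurse'' argument that drives Lemma~\ref{lemcalc} gives
\[
\frac{(2^m-1)!}{2^{\nu_2((2^m-1)!)}} \;=\; \prod_{n=1}^m (2^n-1)!!.
\]
A quick check that $\omega = 2\nu_2(n!)+m$ then yields $\frac{n!(n+1)!}{2^{\omega}} = \bigl(\prod_{n=1}^m(2^n-1)!!\bigr)^2$, and cancelling one copy of this odd nonzero integer from both sides of the divided tautology produces the desired identity.

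I do not foresee any real obstacle in this argument: the only nontrivial calculation is the recursive factorization of $(2^m-1)!$ into odd double factorials, and that is essentially a rerun of the computation already carried out in the proof of Proposition~\ref{thmk=2}. The rest of the argument is bookkeeping of $2$-adic valuations together with the trivial fact that odd integers are units modulo any power of~$2$.
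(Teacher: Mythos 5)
Your proof is correct and follows essentially the same route as the paper's: both rest on Lemma~\ref{lemcalc}, on identifying the odd part of $(2^m)!\,(2^m-1)!$ with $\bigl(\prod_{n=1}^m(2^n-1)!!\bigr)^2$ via the same recursive factorization, and on the invertibility of odd integers modulo~$2^k$. The only difference is presentational: you first isolate the exact integer identity $C_{2^m-1}\prod_{n=1}^m(2^n-1)!!=(2^{m+1}-3)!!$ and then reduce it modulo~$2^k$, whereas the paper manipulates the divisibility statement directly.
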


\begin{proof}
Denote $\omega := \nu_2[(2^{m+1}-2)!] = \nu_2[(2^{m})!(2^{m}-1)!]$ (recall that $C_{2^{m}-1}=\frac{(2^{m+1}-2)!}{(2^{m})!(2^{m}-1)!}$ is odd). Applying Lemma~\ref{lemcalc},
\begin{eqnarray*}
&& C_{2^m-1}\equiv p \pmod {2^k}\\
&\Leftrightarrow& 2^k\mid \frac{(2^{m+1}-2)!}{(2^m)!(2^m-1)!}-p\\
&\Leftrightarrow& 2^k\mid\frac{(2^{m+1}-2)!}{2^{\omega}}-\frac{p(2^m)!(2^m-1)!}{2^{\omega}} \\
&\Leftrightarrow& 2^{k}\mid (2^{m+1}-3)!!\prod_{n=1}^{m}(2^{n}-1)!!-p\left(\prod_{n=1}^{m}(2^{n}-1)!!\right)^2\\
&\Leftrightarrow& 2^{k}\mid \left(\prod_{n=1}^{m}(2^{n}-1)!!\right)\, \left((2^{m+1}-3)!!-p\prod_{n=1}^m(2^n-1)!!\right).
\end{eqnarray*}
But $\prod_{n=1}^{m}(2^{n}-1)!!$ is odd, so $C_{2^m-1}\equiv p \pmod {2^k}$ if and only if $2^k$ divides $({(2^{m+1}-3)!!}-p\prod_{n=1}^m(2^n-1)!!)$, which is our lemma.
\end{proof}

\begin{proposition} \label{propdist}
Let $k\geq2$ be an integer. For all $j \in \{1,\ldots,k-1\}$, $C_{2^j-1}\not\equiv C_{2^k-1} \pmod{2^{k+1}}$.
\end{proposition}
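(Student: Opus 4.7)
The plan is to reformulate the nonequivalence via Lemma~\ref{lemmod} and then induct on~$k$. Note that the identity
\[(2^{m+1}-3)!! \;=\; C_{2^m-1} \cdot \prod_{n=1}^{m}(2^n-1)!! ,\]
implicit in Lemma~\ref{lemcalc} and the proof of Lemma~\ref{lemmod}, holds as an equality of integers. Setting $T_{j,k} := (2^{j+1}-3)!! \cdot \prod_{n=j+1}^{k}(2^n-1)!!$, cancellation of the odd factor $\prod_{n=1}^{k}(2^n-1)!!$ shows that $C_{2^j-1} \equiv C_{2^k-1} \pmod{2^{k+1}}$ is equivalent to $T_{j,k} \equiv (2^{k+1}-3)!! \pmod{2^{k+1}}$. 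Since $(2^{k+1}-3)!! \equiv -1 \pmod{2^{k+2}}$ by Lemma~\ref{lemeg}\,\eqref{2}, the goal reduces to showing $T_{j,k} \not\equiv -1 \pmod{2^{k+1}}$ for every $j \in \{1,\ldots,k-1\}$.

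I would then induct on $k \geq 2$, the base case $T_{1,2} = 1!! \cdot 3!! = 3 \not\equiv -1 \pmod{8}$ being immediate. For the inductive step ($k \geq 3$), assuming the statement at $k-1$, I split on $j$. If $j \leq k-2$, the factorization $T_{j,k} = T_{j,k-1} \cdot (2^k-1)!!$ combined with $(2^k-1)!! \equiv 1 \pmod{2^k}$ from Lemma~\ref{lemeg}\,\eqref{1} gives $T_{j,k} \equiv T_{j,k-1} \pmod{2^k}$; the inductive hypothesis $T_{j,k-1} \not\equiv -1 \pmod{2^k}$ then implies $T_{j,k} \not\equiv -1 \pmod{2^k}$, which is strictly stronger than what we need.

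The remaining case $j = k-1$ is the main step, and where I expect the real work. Here
\[T_{k-1,k} \;=\; (2^k-3)!! \cdot (2^k-1)!! \;=\; (2^k - 1)\bigl((2^k-3)!!\bigr)^2 .\]
By Lemma~\ref{lemeg}\,\eqref{2} with $a=k$ we have $(2^k-3)!! \equiv -1 \pmod{2^{k+1}}$, and squaring this boosts the congruence to $\bigl((2^k-3)!!\bigr)^2 \equiv 1 \pmod{2^{k+2}}$ (the cross-term picks up an extra factor of~$2$). Multiplying by the odd integer $2^k - 1$ preserves the congruence modulo $2^{k+1}$, yielding $T_{k-1,k} \equiv 2^k - 1 \pmod{2^{k+1}}$, which differs from $-1$ by the nonzero amount~$2^k$. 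The whole argument hinges on this boost-by-squaring trick, which is precisely why Lemma~\ref{lemeg}\,\eqref{2} is stated with modulus $2^{a+1}$ rather than merely~$2^a$.
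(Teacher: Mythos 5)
Your proof is correct, and at its core it coincides with the paper's: both arguments come down to the computation $(2^{j+1}-3)!!\,(2^{j+1}-1)!! = (2^{j+1}-1)\bigl((2^{j+1}-3)!!\bigr)^2 \equiv 2^{j+1}-1 \not\equiv -1 \pmod{2^{j+2}}$, i.e.\ exactly your ``boost-by-squaring'' step, fed by the two identities of Lemma~\ref{lemeg}. The only difference is organizational: you reduce everything to the case $j=k-1$ by inducting on $k$ and peeling off one factor $(2^k-1)!!$ at a time, whereas the paper argues by contradiction for fixed $k$ and reaches the same punchline in one step by reducing modulo $2^{j+2}$, which kills all the factors $(2^n-1)!!$ with $n\geq j+2$ simultaneously via~\eqref{1}.
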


\begin{proof}
We prove this proposition by contradiction. Suppose there exists a $j\in\{1,\ldots,{k-1}\}$ such that $C_{2^j-1}\equiv C_{2^k-1} =: p\pmod{2^{k+1}}$. By Lemma~\ref{lemmod}, one would have
\[ p\prod_{n=1}^{j} (2^{n}-1)!! \equiv (2^{j+1}-3)!!   \pmod{2^{k+1}}, \]
and by Lemma~\ref{lemmod} and Fomula~\eqref{2} in Lemma~\ref{lemeg},
\[ p\prod_{n=1}^{k} (2^{n}-1)!! \equiv (2^{k+1}-3)!!  \equiv  -1   \pmod{2^{k+1}}. \]
As $j+2\leq k+1$, both equalities would remain true modulo~$2^{j+2}$. Thus one would have
\begin{eqnarray*}
-1 &\equiv &p\prod_{n=1}^{k} (2^{n}-1)!! \pmod{2^{j+2}}\\
&=&p \prod_{n=1}^{j} (2^{n}-1)!! \times \prod_{n=j+1}^{k} (2^{n}-1)!!\\
&\equiv &(2^{j+1}-3)!! \times \prod_{n=j+1}^k (2^n-1)!!\\
&=& (2^{j+1}-3)!!\cdot (2^{j+1}-1)!! \times \prod_{n=j+2}^{k} (2^{n}-1)!!\\
&\equiv& (2^{j+1}-3)!!\cdot (2^{j+1}-1)!!  \quad \textrm{(by~\eqref{1} in Lemma~\ref{lemeg})} \rule{0pt}{3ex}\\
&=&(2^{j+1}-3)!!^2 \cdot (2^{j+1}-1) \rule{0pt}{3ex}\\
&\equiv &2^{j+1}-1\pmod{2^{j+2}} \quad \textrm{(by~\eqref{2} in Lemma~\ref{lemeg})}, \rule{0pt}{3ex}
\end{eqnarray*}
which is absurd.
\end{proof}

Thanks to the previous proposition, we prove the first claim of Theorem~\ref{thmp}:

\begin{corollary} \label{coro}
For $k\geq2$, the numbers $C_{2^1-1}, C_{2^2-1}, \ldots, C_{2^{k-1}-1}$ all are distinct modulo~$2^k$.
\end{corollary}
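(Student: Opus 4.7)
The plan is to derive the corollary as a direct consequence of Proposition~\ref{propdist} (no new computation required). Given $k\geq 2$ and two indices $1\leq i<j\leq k-1$, I want to show $C_{2^i-1}\not\equiv C_{2^j-1}\pmod{2^k}$.

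The key observation is the following translation of indices: since $i\geq 1$ and $i<j$, one has $j\geq 2$, so Proposition~\ref{propdist} can be applied with the role of ``$k$'' played by $j$ and the role of ``$j$'' played by~$i$. This yields
\[
C_{2^i-1}\not\equiv C_{2^j-1}\pmod{2^{j+1}}.
\]
Now because $j\leq k-1$, we have $j+1\leq k$, hence $2^{j+1}$ divides $2^k$. Consequently any congruence modulo~$2^k$ implies the corresponding congruence modulo~$2^{j+1}$, so the non-congruence above propagates upwards: $C_{2^i-1}\not\equiv C_{2^j-1}\pmod{2^k}$, which is what we want.

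Since $i$ and $j$ were arbitrary in $\{1,\ldots,k-1\}$ with $i<j$, this shows that $C_{2^1-1},\ldots,C_{2^{k-1}-1}$ are pairwise distinct modulo~$2^k$. There is no real obstacle: the whole content of the corollary has already been packaged into Proposition~\ref{propdist}, and what remains is just the bookkeeping observation that $j+1\leq k$ lets us downgrade the modulus from $2^k$ to $2^{j+1}$.
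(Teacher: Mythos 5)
Your proof is correct: instantiating Proposition~\ref{propdist} at level $j$ for each pair $i<j\leq k-1$ and then downgrading the modulus from $2^k$ to $2^{j+1}$ is sound. This is essentially the paper's argument --- the paper packages the same two ingredients (Proposition~\ref{propdist} plus the observation that distinctness modulo a smaller power of $2$ implies distinctness modulo a larger one) as an induction on $k$, whereas you apply the proposition directly to each pair, which is if anything slightly cleaner.
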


\begin{proof}
The case $k=2$ is trivial. Let $k\geq2$ and suppose that, modulo~$2^k$, the numbers $C_{2^1-1}, C_{2^2-1},\ldots,C_{2^{k-1}-1}$ all are distinct, so that they are also distinct modulo~$2^{k+1}$. By Proposition~\ref{propdist}, $C_{2^{j}-1}\not\equiv C_{2^{k}-1}$ (mod $2^{k+1}$) for all $j\in\{1,\ldots,{k-1}\}$, so the numbers $C_{2^1-1}, C_{2^2-1},\ldots, C_{2^k-1} $ all are distinct modulo~$2^{k+1}$. The claim follows by induction.
\end{proof}

\section{Ultimate constancy of the sequence of the $C_{2^n-1}$ modulo~$2^k$}\label{S4}

To complete the proof of Theorem~\ref{thmp}, it remains to prove that the $C_{2^n-1}$ all are equal modulo~$2^k$ for $n\geq{k-1}$.

\begin{proposition}
Let $k\geq 2$, then for all $m\geq {k-1}$, $C_{2^m-1}\equiv C_{2^{k-1}-1}\pmod {2^k}$.
\end{proposition}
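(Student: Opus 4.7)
The plan is to exploit Lemma~\ref{lemmod}, which translates the claim into a statement purely about double factorials whose stabilization modulo~$2^k$ is the content of Lemma~\ref{lemeg}.

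First I would dispose of the case $k=2$: it is exactly Proposition~\ref{thmk=2}, which says that every odd Catalan number is $\equiv 1\pmod 4$, so the sequence is already constant on $m\geq 1 = k-1$. For the rest of the proof I would assume $k\geq 3$.

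Next, fix $m\geq k-1$ and set $p_m := C_{2^m-1}\bmod 2^k$. By Lemma~\ref{lemmod}, $p_m$ is characterised modulo $2^k$ by
\[
(2^{m+1}-3)!! \equiv p_m \prod_{n=1}^{m}(2^n-1)!! \pmod{2^k},
\]
and the product $\prod_{n=1}^m(2^n-1)!!$ is odd, hence invertible modulo $2^k$. So it is enough to show that both the left-hand side and the product on the right are constant in $m$ modulo~$2^k$ once $m\geq k-1$.

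For the left-hand side, since $m+1\geq k\geq 3$, identity~\eqref{2} of Lemma~\ref{lemeg} gives $(2^{m+1}-3)!!\equiv -1\pmod{2^{m+2}}$, which a fortiori is $\equiv -1\pmod{2^k}$. For the product, identity~\eqref{1} of Lemma~\ref{lemeg} gives $(2^n-1)!!\equiv 1\pmod{2^n}$ whenever $n\geq 3$; in particular every factor with $n\geq k$ is $\equiv 1\pmod{2^k}$ and can be dropped, so
\[
\prod_{n=1}^{m}(2^n-1)!!\equiv \prod_{n=1}^{k-1}(2^n-1)!! \pmod{2^k}
\]
for every $m\geq k-1$. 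Combining, $-1\equiv p_m\prod_{n=1}^{k-1}(2^n-1)!!\pmod{2^k}$, and the right-hand factor depends only on $k$, so $p_m$ is the same for all $m\geq k-1$; specialising $m=k-1$ yields $p_m = C_{2^{k-1}-1}$. I do not expect any real obstacle here: the whole argument is a mechanical application of the two lemmas, the only point to check being that the ``tail'' factors $(2^n-1)!!$ for $n\geq k$ in the product of Lemma~\ref{lemmod} are precisely those killed by~\eqref{1} modulo~$2^k$.
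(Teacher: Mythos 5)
Your argument is correct, but it takes a genuinely different route from the paper's. The paper proceeds by induction on $m$: assuming the claim for $m-1$, it shows $(2^{m+1}-3)!!\equiv(2^m-3)!!\pmod{2^k}$ by grouping the new odd factors $(2^m-1)(2^m+1)\cdots(2^{m+1}-3)$ into $2^{m-k}$ blocks each congruent to $(2^k-1)!!\equiv 1$, drops the factor $(2^m-1)!!$ from the product using~\eqref{1}, and then invokes the induction hypothesis through Lemma~\ref{lemmod}; identity~\eqref{2} is never used. You instead avoid induction on $m$ entirely: you evaluate the left-hand side of the congruence in Lemma~\ref{lemmod} outright as $-1$ via~\eqref{2} (valid since $m+1\geq k\geq 3$ and $2^{m+2}$ is a multiple of $2^k$), truncate the product at $n=k-1$ via~\eqref{1}, and conclude by invertibility of the odd product. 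This is slightly stronger than what is asked, since it produces the closed form $C_{2^m-1}\equiv-\bigl(\prod_{n=1}^{k-1}(2^n-1)!!\bigr)^{-1}\pmod{2^k}$ for all $m\geq k-1$, rather than merely the constancy of the sequence. Your separate treatment of $k=2$ via Proposition~\ref{thmk=2} is not a cosmetic precaution but a necessary one, because~\eqref{1} and~\eqref{2} only hold for exponents at least $3$; note that the paper's own inductive step quietly applies~\eqref{1} to $(2^k-1)!!$ and so also implicitly requires $k\geq 3$, a point your write-up handles more carefully. The one detail worth spelling out in a final version is the uniqueness step: from $-1\equiv p_m\prod_{n=1}^{k-1}(2^n-1)!!\pmod{2^k}$ with an odd, hence invertible, coefficient, the residue $p_m$ is determined independently of $m$, and equals $p_{k-1}=C_{2^{k-1}-1}\bmod 2^k$ by definition.
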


\begin{proof}
Denote $C_{2^{k-1}-1} =: p \pmod{2^k}$. We will show that $C_{2^m-1}\equiv p \pmod{2^k}$ for all $m\geq {k-1}$ by induction. Let $m\geq k$ be such that the previous equality stands true for~${m-1}$. By Lemma~\ref{lemmod}, it suffices to show that $(2^{m+1}-3)!!\equiv p  \prod_{n=1}^m(2^n-1)!! \pmod{2^k}$. To do this, we are going to show that $(2^{m+1}-3)!! \equiv (2^m-3)!! \pmod{2^k}$ and that $p\prod_{n=1}^m(2^n-1)!! \equiv (2^m-3)!! \pmod{2^k}$.

The first equality follows from the following computation:
\begin{eqnarray*}
&& (2^{m+1}-3)!!\\ 
&=& (2^{m}-3)!! \times (2^{m}-1)\times (2^{m}+1) \times\cdots\times (2\cdot 2^{m}-3)\\
&\equiv& (2^{m}-3)!! \cdot \left(1\times 3\times\cdots\times (2^k-1)\right)^{2^{m-k}}  \pmod{2^k}\\
&\equiv& (2^{m}-3)!! \quad \textrm{(by~\eqref{1} in Lemma~\ref{lemeg})}.
\end{eqnarray*}

To get the other equality, using again~\eqref{1} in Lemma~\ref{lemeg}, one has
\[ p\prod_{n=1}^{m}(2^{n}-1)!! = (2^{m}-1)!!\cdot p \prod_{n=1}^{m-1}(2^{n}-1)!! \equiv p\prod_{n=1}^{m-1}(2^{n}-1)!! \pmod{2^k}. \]
But by Lemma~\ref{lemmod}, the induction hypothesis means that $p\prod_{n=1}^{m-1}(2^{n}-1)!! \equiv {(2^m-3)!!} \pmod {2^k}$, whence the result.
\end{proof}

\section{Going further}

After the series of works on odd Catalan numbers modulo~$2^k$ this article belongs to, a natural question would be how many distinct \emph{even} Catalan numbers there are modulo~$2^k$ and how these numbers behave. An idea to do this would be to study the $C_n$ having some fixed $2$-adic valuation.

More generally, one could also wonder what happens for Catalan numbers modulo $p^k$ for prime~$p$, which is a question that mathematicians studying the arithmetic properties of Catalan numbers have been asking for a long time.

\section*{Acknowledgements}

The author thanks Pr P.~Shuie and Pr S.-C.~Liu for their mathematical advice, and R.~Peyre for helping to improve the writing of this article.

\bibliographystyle{amsplain}
\bibliography{article}
\end{document}